\numberwithin{equation}{section}
\def\NN{\mbox{$I\hspace{-.06in}N$}}
\def\CC{\mbox{$C\hspace{-.11in}\protect\raisebox{.5ex}{\tiny$/$}
\hspace{.06in}$}}
\newtheorem{theorem}{Theorem}
\newtheorem{definition}[theorem]{Definition}
\newtheorem{lemma}{Lemma}
\newtheorem{remark}{Remark}
\newtheorem{proposition}{Proposition}
\newtheorem{corollary}{Corollary}
\newtheorem{example}{Example}
\begin{document}
 \title{Sets non-thin at $\infty$ in $\mathbb C ^m$, and the growth of sequences of entire functions of genus zero}
\author{Tuyen Trung Truong}
    \address{Department of Mathematics, Indiana University Bloomington, IN 47405 USA}
 \email{truongt@indiana.edu}
\thanks{The author would like to thank Professor Bedford and Professor Levenberg for their generous and fruitful help.}
    \date{\today}
    \keywords{Growth of entire functions; Non-thin at infinity; Pluricomplex Green function; Robin constant; Siciak extremal function}
    \subjclass[2000]{31B05, 32A15, 31A22.}
    \begin{abstract}
In this paper we define the notion of non-thin at $\infty$ as follows: Let $E$ be a subset of $\mathbb C^m$. For any $R>0$ define $E_R=E\cap \{z\in
\mathbb C ^m :|z|\leq R\}$. We say that $E$ is non-thin at $\infty$ if
\begin{eqnarray*}
\lim _{R\rightarrow\infty}V_{E_R}(z)=0
\end{eqnarray*}
for all $z\in \mathbb C^m$, where $V_E$ is the pluricomplex Green function of $E$.

This definition of non-thinness at $\infty$ has good properties: If $E\subset \mathbb C^m$ is non-thin at $\infty$ and $A$ is pluripolar then
$E\backslash A$ is non-thin at $\infty$, if $E\subset \mathbb C^m$ and $F\subset \mathbb C^n$ are closed sets non-thin at $\infty$ then $E\times F\subset
\mathbb C^m\times \mathbb C^n$ is non-thin at $\infty$ (see Lemma \ref{Lem1}).

Then we explore the properties of non-thin at $\infty$ sets and apply this to extend the results in \cite{mul-yav} and \cite{trong-tuyen}.
 \end{abstract}
\maketitle
\section{Introduction}       
\label{SecIntroduction}

Fix $m\in \NN$, let $\mathbb C ^m$ be the usual complex $m$-dimensional complex space. Before going into the main points, we recall some facts about the
potential theory in $\mathbb C^m$. Let $U$ be an open subset of $\mathbb C^m$. A function $u:U\rightarrow [-\infty ,\infty )$ (see \cite{klimek}) is
called PSH in $U$ (written $u\in PSH(U)$) if $u$ is upper-semicontinuous and when restricted to any complex line $L\simeq \mathbb C$ then $u$ is
subharmonic (see \cite{klimek}).

A $PSH(\mathbb C^m)$ function $u$ is said to be of minimal growth if
\begin{eqnarray*}
u(z)-\log |z|\leq O(1), \mbox{ as }|z|\rightarrow \infty ,
\end{eqnarray*}
here $|z|$ is the usual Euclide norm of an element $z\in \mathbb C^m$. We denote the set of all such functions by $\mathbb L$.

Let $E$ be a subset in $\mathbb C^m$. Then the pluricomplex Green function of the set $E$ with pole at infinity (see \cite{klimek}, page 184) is
\begin{eqnarray*}
V_E(z)=\sup \{u(z):~u\in \mathbb L,~u|_{E}\leq 0\}~(z\in \mathbb C ^m).
\end{eqnarray*}
$V_E$ is also called the Siciak extremal function of the set $E$. By definition we see that $V_E\geq 0$.

For any function $u:\mathbb C ^m\rightarrow [-\infty ,\infty )$ we define its upper-semicontinuos regularization $u^*$ (see \cite{klimek}) by
\begin{eqnarray*}
u^*(z)=\limsup _{\zeta \rightarrow z}u(\zeta ).
\end{eqnarray*}
Let $V_E^*(z)$ be the regularization of the Green function $V_E(z)$. It is well-known (see \cite{klimek}) that there are only three cases:

Case 1: $V_E^*(z)\equiv +\infty$. Then $E$ is pluripolar.

Case 2: $V_E^*(z)\in \mathbb L$, and $V_E^*\not\equiv 0$.

Case 3: $V_E^*(z)\equiv 0$.

When $m=1$ then $V_E^*(z)\equiv 0$ iff the set $E$ is non-thin at $\infty$, or equivalently the set $E^*=\{z:~1/z\in E \}$ is non-thin at $0$ (see
\cite{mul-yav}, \cite{cegrell-kolodziej-levenberg}). Recall that a subset $E$ of $\mathbb C^m$ is pluri-thin (or thin for brevity) at a point $a\in
\mathbb C ^m$ if either $a$ is not a limit point of $E$ or there is a neighborhood $U$ of a and a function $u\in PSH(U)$ such that
\begin{eqnarray*}
\limsup _{z\rightarrow a,~z\in E\backslash a}u(z)<u(a).
\end{eqnarray*}
If $E$ is not thin then it is called non-thin.

In \cite{mul-yav} the authors proved the following result (see also \cite{cegrell-kolodziej-levenberg}, page 270)
\begin{proposition}
Let $E$ be a closed subset of $\mathbb C$. Then the following four statements are equivalent

1. $E$ is non-thin at $\infty$.

2. For any $z\in \mathbb C$ we have
\begin{eqnarray*}
\lim _{R\rightarrow\infty} V_{E_R}(z)=0,
\end{eqnarray*}
where we define $E_R=E\cap \{z\in \mathbb C :~|z|\leq R\}$.

3. If sequences $(P_n)$ of polynomials and $k_n\geq deg(P_n)$ satisfy
\begin{eqnarray*}
\limsup _{n\rightarrow\infty}|P_n(z)|^{1/k_n}\leq 1
\end{eqnarray*}
for all $z\in E$, then
\begin{eqnarray*}
\limsup _{n\rightarrow\infty}|P_n(z)|^{1/k_n}\leq 1
\end{eqnarray*}
for all $z\in \mathbb C$.

4. $V_E(z)\equiv 0$. \label{Pro1}\end{proposition}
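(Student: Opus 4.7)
The plan is to establish the four equivalences by separating the easy structural identifications $1 \Leftrightarrow 4$ and $2 \Leftrightarrow 4$ from the harder analytic steps in $3 \Leftrightarrow 4$; the latter I would obtain by proving $3 \Rightarrow 2$ via contrapositive and $4 \Rightarrow 3$ via a Baire/Josefson regularization argument. First, $1 \Leftrightarrow 4$ is automatic from the characterization recalled in the excerpt: for $m=1$, $E$ is non-thin at $\infty$ iff $V_E^* \equiv 0$, and $V_E \equiv 0$ is equivalent to $V_E^* \equiv 0$ since $0 \leq V_E \leq V_E^*$ and $V_E \equiv 0$ gives $V_E^*(z) = \limsup_{\zeta \to z} V_E(\zeta) \equiv 0$. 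For $2 \Leftrightarrow 4$, I would identify $W(z) := \lim_{R \to \infty} V_{E_R}(z) = \inf_R V_{E_R}(z)$ (the limit exists because $V_{E_R}$ is decreasing in $R$) with $V_E$ itself: $E_R \subset E$ gives $V_E \leq V_{E_R}$ and hence $V_E \leq W$; conversely, $W$ is PSH as a decreasing limit of PSH functions and belongs to $\mathbb L$ (since $W \leq V_{E_{R_0}} \in \mathbb L$ for any fixed $R_0$), and for $z \in E$ with $R \geq |z|$ one has $V_{E_R}(z) = 0$ (because $V_{E_R} \geq 0$ always and any admissible $u$ for $V_{E_R}$ satisfies $u(z) \leq 0$), so $W \equiv 0$ on $E$. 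This makes $W$ admissible in the sup defining $V_E$, giving $W \leq V_E$, hence $W = V_E$ and $2 \Leftrightarrow 4$.

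For $3 \Rightarrow 2$ I would argue by contrapositive: if $V_{E_{R_n}}(z_0) \geq \delta > 0$ for some $z_0$ and some $R_n \to \infty$, then the Zakharyuta--Siciak theorem applied to the compact set $E_{R_n}$ produces polynomials $P_n$ with $\|P_n\|_{E_{R_n}} \leq 1$ and $|P_n(z_0)|^{1/\deg P_n} \geq e^{\delta/2}$; for any fixed $z \in E$, once $R_n \geq |z|$ one has $|P_n(z)| \leq 1$, so the hypothesis of 3 is satisfied while its conclusion is violated at $z_0$.

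The main obstacle is $4 \Rightarrow 3$, whose difficulty lies in upgrading a pointwise hypothesis on $E$ to a conclusion on all of $\mathbb C$ without any a priori uniform local bound on the functions $u_n := \tfrac{1}{k_n} \log|P_n| \in \mathbb L$. I would first decompose $E = \bigcup_N E^{(N)}$ with $E^{(N)} := \{z \in E : u_n(z) \leq 1 \ \text{for all}\ n \geq N\}$, each closed because $u_n$ is upper semicontinuous. Since $V_E \equiv 0$ forces $V_E^* \equiv 0$ and hence $E$ non-pluripolar (by the three-case dichotomy recalled in the excerpt), and a countable union of pluripolar sets is pluripolar by Josefson's theorem, some $E^{(N_0)}$ is non-pluripolar. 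Bernstein--Walsh then gives $u_n \leq 1 + V_{E^{(N_0)}}^*$ for all $n \geq N_0$, the uniform local upper bound needed since $V_{E^{(N_0)}}^* \in \mathbb L$. Standard PSH regularization (Brelot--Cartan) now makes $u^* := (\limsup_n u_n)^*$ a PSH function in $\mathbb L$, with the negligible set $\{\limsup_n u_n < u^*\}$ pluripolar. Hence $u^* \leq 0$ on $E$ off some pluripolar $P$, whence $u^*$ is admissible in the sup defining $V_{E \setminus P}$, giving $u^* \leq V_{E \setminus P}^* = V_E^* \equiv 0$ by the removability of pluripolar sets for the Siciak extremal function. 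Therefore $\limsup_n u_n \leq u^* \leq 0$ on $\mathbb C$, which is $\limsup_n |P_n|^{1/k_n} \leq 1$ on $\mathbb C$, as required.
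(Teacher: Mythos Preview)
The paper does not itself prove Proposition~1; it is quoted from \cite{mul-yav} (see also \cite{cegrell-kolodziej-levenberg}). The nearest in-paper argument is the proof of Theorem~1, which establishes the equivalence of your statements~2 and~3 in $\mathbb C^m$ (their statement~1 of Theorem~1 is your statement~2, adopted as the \emph{definition} of non-thin at~$\infty$).

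Your overall cycle is sound: $2\Rightarrow 4$ is the trivial half of your $2\Leftrightarrow 4$ (from $V_E\le V_{E_R}$), and together with $1\Leftrightarrow 4$, $4\Rightarrow 3$ and $3\Rightarrow 2$ this closes the loop. There is, however, a genuine slip in the other half of your direct $2\Leftrightarrow 4$ argument: you assert that $W=\lim_R V_{E_R}$ is PSH as a decreasing limit of PSH functions, but $V_{E_R}$ is in general \emph{not} PSH---only $V_{E_R}^*$ is---so $W$ need not be admissible for $V_E$. This is not a cosmetic issue: the identity $\lim_R V_{E_R}=V_E$ is false for $m\ge 2$ (Example~1 of the paper), so any direct proof must invoke something specifically one-dimensional. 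Fortunately your detour $4\Rightarrow 3\Rightarrow 2$ makes the flawed half redundant.

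On the substantive implications your argument and the paper's run in parallel. Your contrapositive $3\Rightarrow 2$ via Zakharyuta--Siciak polynomials is exactly the paper's proof of $(2\Rightarrow 1)$ in Theorem~1. For the harder direction, both your $4\Rightarrow 3$ and the paper's $(1\Rightarrow 2)$ start from a Baire-type exhaustion, but finish differently. The paper works on each compact slice $E_R$, decomposing it as $E_R^{h,l}=\bigcap_{n\ge h}\{v_n<1/l\}$, obtains $v\le V_{E_R^{h,l}}^*+1/l$, and passes to $v\le V_{E_R}^*$ via monotone continuity of $V^*$ under increasing unions (Klimek, Corollary~5.2.6), then lets $R\to\infty$ and handles the pluripolar exceptional set by a measure-zero argument. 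You instead decompose $E$ itself, select a single non-pluripolar piece $E^{(N_0)}$ to produce a uniform local bound via Bernstein--Walsh, apply Brelot--Cartan so that $(\limsup u_n)^*\in\mathbb L$ with pluripolar negligible set, and conclude from removability $V_{E\setminus P}^*=V_E^*\equiv 0$. Your route exploits $V_E^*\equiv 0$ directly and is arguably more streamlined; the paper's route avoids Brelot--Cartan and the removability lemma in favor of the more elementary monotone behaviour of extremal functions, at the price of the extra parameter~$l$ and the final Lebesgue-measure argument.
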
 When $m>1$, as to our knowledge, there was no conception of non-thin at $\infty$. The difficulty here
is because we can not reduce the definition of non-thinness of $E$ at $\infty$ to the non-thinness at $0$ of some other sets $E^*$, as it was when $m=1$.
However, Proposition \ref{Pro1} suggests a way to define non-thinness at $\infty$ in higher dimensions:
\begin{definition}
Let $E$ be a subset of $\mathbb C^m$. For any $R>0$ define $E_R=E\cap \{z\in \mathbb C^m :|z|\leq R\}$. We say that $E$ is non-thin at $\infty$ if
\begin{eqnarray*}
\lim _{R\rightarrow\infty}V_{E_R}(z)=0
\end{eqnarray*}
for all $z\in \mathbb C^m.$

If $E$ is not non-thin at $\infty$ then we say $E$ is thin at $\infty$. \label{Def1}\end{definition} This definition of non-thinness at $\infty$ has good
properties: If $E\subset \mathbb C^m$ is non-thin at $\infty$ and $A$ is pluripolar then $E\backslash A$ is non-thin at $\infty$, if $E\subset \mathbb
C^m$ and $F\subset \mathbb C^n$ are closed sets non-thin at $\infty$ then $E\times F\subset \mathbb C^m\times \mathbb C^n$ is non-thin at $\infty$ (see
Lemma \ref{Lem1}).

Our first result in this paper is the following, which is an analog to the case $m=1$:
\begin{theorem}
Let $E$ be a closed subset of $\mathbb C^m$. Then the following two statments are equivalent:

1. $E$ is non-thin at $\infty$.

2. If sequences $(P_n)$ of polynomials and $k_n\geq deg(P_n)$ satisfy
\begin{eqnarray*}
\limsup _{n\rightarrow\infty}|P_n(z)|^{1/k_n}\leq 1
\end{eqnarray*}
for all $z\in E$, then
\begin{eqnarray*}
\limsup _{n\rightarrow\infty}|P_n(z)|^{1/k_n}\leq 1
\end{eqnarray*}
for all $z\in \mathbb C^m$. \label{Theo1}\end{theorem}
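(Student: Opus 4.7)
The plan is to prove the two implications separately. The direction $(2) \Rightarrow (1)$ is handled by contraposition, producing explicit polynomial witnesses via the Siciak--Zahariuta description of $V_{E_R}$. The harder direction $(1) \Rightarrow (2)$ will go through the upper envelope $\phi^{*} = (\limsup_{n} k_{n}^{-1}\log|P_{n}|)^{*}$ combined with Lemma \ref{Lem1}.

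For $(2) \Rightarrow (1)$, suppose $E$ is thin at infinity, so there is $z_{0} \in \mathbb C^{m}$ with $\lim_{R\to\infty} V_{E_{R}}(z_{0}) = \delta > 0$ (the limit exists by monotonicity of $V_{E_{R}}$ in $R$; the case $E$ pluripolar gives $V_{E_{R}} \equiv +\infty$ and is handled similarly). For each sufficiently large $R$ the Siciak--Zahariuta representation produces a polynomial $P_{R}$ with $\|P_{R}\|_{E_{R}} \leq 1$ and $\log|P_{R}(z_{0})| \geq (\delta/2)\deg P_{R}$. Picking $R_{n} \to \infty$ and setting $P_{n} = P_{R_{n}}$, $k_{n} = \deg P_{n}$: any $z \in E$ lies in $E_{R_{n}}$ eventually, so $|P_{n}(z)|^{1/k_{n}} \leq 1$, whereas $|P_{n}(z_{0})|^{1/k_{n}} \geq e^{\delta/2} > 1$, contradicting (2).

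For $(1) \Rightarrow (2)$, set $u_{n} = k_{n}^{-1}\log|P_{n}| \in \mathbb L$ and $\phi = \limsup_{n} u_{n}$. Given $\epsilon > 0$, the closed sets $E_{N}^{\epsilon} = \{z \in E : u_{n}(z) \leq \epsilon \text{ for all } n \geq N\}$ increase to $E$ by the pointwise hypothesis; assuming $E$ non-pluripolar (the pluripolar case being handled as above), some $E_{N}^{\epsilon}$ is non-pluripolar and hence contains a compact non-pluripolar subset $K$ (by intersection with a large ball). The Bernstein--Walsh inequality on $K$ then gives
\begin{eqnarray*}
u_{n}(z) \leq \epsilon + V_{K}^{*}(z) \quad (n \geq N),
\end{eqnarray*}
placing the tail family $\{u_{n}\}_{n \geq N}$ under the single $\mathbb L$-function $\epsilon + V_{K}^{*}$. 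Standard pluripotential theory then yields that $\phi^{*}$ is PSH on $\mathbb C^{m}$, in fact lies in $\mathbb L$, and that $\{\phi^{*} > \phi\}$ is pluripolar. Hence $\phi^{*} \leq 0$ on $E \setminus A$ for some pluripolar set $A$. Lemma \ref{Lem1} says that $E \setminus A$ is non-thin at infinity, and combining this with $V_{E\setminus A} \leq V_{(E\setminus A)_{R}} \to 0$ forces $V_{E\setminus A} \equiv 0$; the definition of $V_{E\setminus A}$ then yields $\phi^{*} \leq V_{E\setminus A} \equiv 0$ on all of $\mathbb C^{m}$, and in particular $\limsup_{n}|P_{n}(z)|^{1/k_{n}} \leq e^{\phi^{*}(z)} \leq 1$, as required.

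The main obstacle I anticipate is the local-uniform-upper-bound step in $(1)\Rightarrow(2)$: extracting a compact non-pluripolar $K \subset E_{N}^{\epsilon}$ and turning the pointwise hypothesis into the Bernstein--Walsh bound above rests on the Baire-type decomposition $E = \bigcup_{N} E_{N}^{\epsilon}$ together with countable stability of pluripolarity, and it is here that one must ensure the family $\{u_{n}\}_{n\ge N}$ is genuinely dominated by a single $\mathbb L$-function rather than by $n$-dependent bounds. The remaining ingredients---pluripolarity of the negligible set $\{\phi^{*}>\phi\}$ for upper envelopes of PSH sequences, and the removal-of-pluripolar-sets property in Lemma \ref{Lem1}---are expected to be more routine.
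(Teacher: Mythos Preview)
Your proposal is correct, and for $(2)\Rightarrow(1)$ it coincides with the paper's argument (Siciak's polynomial approximation of $V_{E_{R_n}}$ applied at a point where the limit is positive). For $(1)\Rightarrow(2)$ both you and the paper begin with the same exhaustion $E = \bigcup_N E_N^{\epsilon}$ (the paper writes this as $E_R = \bigcup_h E_R^{h,l}$), but the finishing moves diverge. The paper never forms $\phi^{*}$: it bounds $v=\limsup v_n$ directly by $V^{*}_{E_R^{h,l}}+1/l$, passes to the limit in $h$ via Klimek's Corollary~5.2.6 (monotone convergence of $V^{*}$ for increasing sets) to obtain $v\le V^{*}_{E_R}$ for every $R$, and then uses that $V^{*}_{E_{R_n}}=V_{E_{R_n}}$ off a pluripolar (hence Lebesgue-null) set together with the non-thinness hypothesis to get $v\le 0$ almost everywhere, hence everywhere. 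You instead extract a single non-pluripolar compact $K\subset E_N^{\epsilon}$ to place $\phi^{*}$ in $\mathbb L$, invoke Bedford--Taylor negligibility to pass from $\phi$ to $\phi^{*}$ on $E\setminus A$, and then appeal to Lemma~\ref{Lem1} so that $\phi^{*}\le V_{E\setminus A}\equiv 0$. Your route is arguably cleaner and in fact mirrors the scheme the paper later uses to prove Theorem~\ref{Theo4}; the paper's route is more self-contained in that it does not call on Lemma~\ref{Lem1} (whose proof in the paper actually borrows the negligible-set argument from the proof of Theorem~\ref{Theo1}, so be aware of the logical ordering, though there is no genuine circularity since Lemma~\ref{Lem1} only needs that sub-argument, not the full theorem).
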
 If $E$ is non-thin at $\infty$ then it is easy to see that $V_E\equiv 0$. The converse is not true
for $m\geq 2$, by the following example (which essentially is the same as Example 1.1 in \cite{cegrell-kolodziej-levenberg}).
\begin{example}
Let $E$ be a subset of $\mathbb C^2$ defined by
\begin{eqnarray*}
E=\{(z_1,z_2)\in \mathbb C^2: ~|z_2|\leq 1 \}\bigcup \{(z_1,z_2)\in \mathbb C^2: ~z_1=0\}.
\end{eqnarray*}
Then $V_E\equiv 0$ but $V_E$ is thin at $\infty$. \label{Exam1}\end{example}
\begin{proof}
Use arguments in Example 1.1 in \cite{cegrell-kolodziej-levenberg}, it is easy to see that $V_E\equiv 0$.

If $E$ was non-thin at $\infty$ then since $A=\{(z_1,z_2)\in \mathbb C^2: ~z_1=0\}$ is pluripolar, by Lemma \ref{Lem1} we should have $E\backslash A$ is
non-thin at $\infty$. In particular we should have $V_{E\backslash A}\equiv 0$. However as computed in \cite{cegrell-kolodziej-levenberg} we have
$V_{E\backslash A}(z)=\log ^+|z|$, which is a contradiction.

Hence $E$ is thin at $\infty$.
\end{proof}
The following result gives a characterization of sets $E$ with $V_E\equiv 0$, which also shows that the non-thin at $\infty$ sets are not rare.
\begin{theorem}
Let $E$ be a closed subset of $\mathbb C^m$. Then $V_E\equiv 0$ iff any open neighborhood of $E$ is non-thin at $\infty$. \label{Theo2}\end{theorem}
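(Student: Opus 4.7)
The plan is to prove the two implications separately; both rest on manipulating competitors in $\mathbb L$ for the Siciak extremal function of suitably chosen open sets.

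For the direction ($\Leftarrow$), assume every open neighborhood of $E$ is non-thin at $\infty$. I will take an arbitrary competitor $u\in\mathbb L$ with $u|_E\leq 0$ and show $u\leq 0$ on all of $\mathbb C^m$. Fix $\epsilon>0$; by upper semicontinuity of $u$, the sublevel set $U_\epsilon:=\{z\in\mathbb C^m:u(z)<\epsilon\}$ is open, and $u|_E\leq 0<\epsilon$ gives $E\subset U_\epsilon$. By hypothesis $U_\epsilon$ is non-thin at $\infty$, so $\lim_R V_{(U_\epsilon)_R}(z)=0$; since $V_{U_\epsilon}\leq V_{(U_\epsilon)_R}$ and $V_{U_\epsilon}\geq 0$ pointwise, this already forces $V_{U_\epsilon}\equiv 0$. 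The function $u-\epsilon$ lies in $\mathbb L$ and is $\leq 0$ on $U_\epsilon$, so it competes in the sup defining $V_{U_\epsilon}$ and therefore $u-\epsilon\leq V_{U_\epsilon}\equiv 0$ pointwise. Sending $\epsilon\to 0^+$ and taking the supremum over admissible $u$ then yields $V_E\equiv 0$.

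For the direction ($\Rightarrow$), assume $V_E\equiv 0$ and let $U$ be any open neighborhood of $E$. Since $E\subset U$, every competitor for $V_U$ is a competitor for $V_E$, hence $V_U\leq V_E\equiv 0$; combined with $V_U\geq 0$ this gives $V_U\equiv V_U^*\equiv 0$. I need to upgrade this to $\lim_R V_{U_R}(z)=0$ for every $z$, which is precisely non-thinness of $U$ at $\infty$. Since the open sets $U_R$ increase to $U$ as $R\to\infty$, the regularizations $V_{U_R}^*$ form a decreasing sequence of nonnegative PSH functions, and the standard monotone convergence theorem for Siciak extremal functions on increasing unions (Klimek, Chapter~5) identifies their limit with $V_U^*\equiv 0$. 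Because $0\leq V_{U_R}\leq V_{U_R}^*$, a squeeze argument forces $V_{U_R}(z)\to 0$ for every $z$.

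The main obstacle is in the ($\Rightarrow$) direction: the classical monotone convergence theorem is naturally phrased for the upper semicontinuous regularization $V^*$, whereas Definition~\ref{Def1} is stated in terms of $V$ itself. The squeeze above sidesteps this issue neatly because $V_U^*\equiv 0$ leaves no pluripolar exceptional set where $V_{U_R}$ could escape to a positive value. By contrast, the ($\Leftarrow$) direction is clean and uses only the definitions together with the elementary observation that upper-semicontinuity turns $\{u<\epsilon\}$ into an open neighborhood of $E$.
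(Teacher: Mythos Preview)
Your argument is correct. The ($\Leftarrow$) direction matches the paper's: the paper argues the contrapositive, using the specific threshold $u(z_0)/2$ in place of your arbitrary $\epsilon$, but the mechanism---an open sublevel set of a competitor $u\in\mathbb L$---is identical.

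The ($\Rightarrow$) direction takes a genuinely different route. The paper exploits the closedness of $E$: each $E_R$ is compact, so a closed $\epsilon$-fattening $E_{R,\epsilon}$ can be squeezed inside $F_{R+1}$; by a regularity result (Corollary~5.1.5 in Klimek) one has $V^*_{E_{R,\epsilon}}=0$ on $E_{R,\epsilon}\supset E_R$, hence the decreasing PSH limit $v=\lim_R V^*_{F_R}$ vanishes on all of $E$ and is therefore dominated by $V_E\equiv 0$. You instead observe $V^*_U\equiv 0$ directly from $V_U\leq V_E$ and invoke the monotone convergence theorem for Siciak extremal functions on increasing unions (Corollary~5.2.6 in Klimek) to get $V^*_{U_R}\downarrow V^*_U=0$, then squeeze. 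Your route is shorter, avoids the fattening construction, and does not actually use that $E$ is closed---so it proves the slightly stronger statement that for \emph{any} $E$ with $V_E\equiv 0$, every open neighborhood is non-thin at $\infty$.

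Two minor points to clean up. First, the sets $U_R=U\cap\{|z|\leq R\}$ are not open (they are intersections with a closed ball); this is only a slip of phrasing, since Corollary~5.2.6 does not require openness. Second, you should remark that $U_R$ is non-pluripolar for all large $R$ (since $V_U\equiv 0$ forces $U$ non-pluripolar, and a countable union of pluripolar sets is pluripolar), so that the hypothesis of the monotone convergence theorem is met.
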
 In
\cite{mul-yav}, the authors applied the result of Proposition \ref{Pro1} to various problems of convergence of sequences of polynomials in one complex
variable. In \cite{trong-tuyen} we extended the results to sequences of entire functions of genus zero of one complex variable. With our definition of
non-thinness at $\infty$ for a set $E\subset \mathbb C^m$ here, we can extend Theorems 1 and 3 in \cite{trong-tuyen} to the case of entire functions of
genus zero of several complex variables, under appropriate assumptions.

If $P(z)$ is an entire function of genus zero of one complex variable then we have a representation of $P$ by (see \cite{lev})
\begin{equation}
P(z)=az^\alpha \prod _j(1-\frac{z}{z_j}).\label{Chap0.1}
\end{equation}
Moreover
\begin{eqnarray*}
\sum _{j}\frac{1}{|z_j|}<\infty .
\end{eqnarray*}

In higher dimensions we still have a canonical representation of any entire function of genus zero $P(z_1,\ldots ,z_m)$ (see Proposition 3.16 in
\cite{lelong-gruman}, see also \cite{ronkin}). However in this paper we will use instead the reduction to one variable complex as described below.

Let $P(z_1,\ldots ,z_m)$ be an entire function of genus zero in $\mathbb C ^m$ . Then for any $\lambda \in \mathbb P^{m-1}$ (here $\mathbb P^{m-1}$ means
the complex projective space of dimension $m-1$) which represents a pluricomplex direction in $\mathbb C^m$, we define a one complex variable function
\begin{eqnarray*}
P_{\lambda}(w)=P(w\lambda )
\end{eqnarray*}
where $w\in \mathbb C$. Here we understand the expression $w\lambda$ as follows: $w\lambda :=w\widetilde{\lambda}\in \mathbb C^m $ where
$\widetilde{\lambda}\in S^{2m-1}=\{z\in \mathbb C^m:|z|=1\}$ is a fixed representative of $\lambda$. Then $P_{\lambda}(w)$ is also an entire function of
genus zero of one complex variable hence we can use (\ref{Chap0.1}) to write
\begin{eqnarray*}
P_{\lambda}(w)=a_{\lambda}w^{\alpha_{\lambda}}\prod _{j}(1-\frac{w}{w_{\lambda ,j}}).
\end{eqnarray*}
We define the counting functions of $P_{\lambda}$ and $P$ as follows: If $t>0$ then
\begin{eqnarray*}
\eta _{P}(t,\lambda )&=&\mbox{the number of elements of the set}\{w\in \CC :P_{\lambda }(w)=0,~|w|\leq t\},\\
\eta _P(t)&=&\int _{\mathbb P^{m-1}}n_{P}(t,\lambda )d\lambda
\end{eqnarray*}
where we integrate using the standard metric on $\mathbb P^{m-1}$.

The following result generalizes Theorem 1 in \cite{trong-tuyen}
\begin{theorem}
Let $E\subset \mathbb C^m$ be non-thin at $\infty$.

Let $P_n:\mathbb C^m\rightarrow \mathbb C$ be a sequence of entire functions of genus zero, and let $(k_n)$ be a sequence of positive numbers greater
than $1$. For $\lambda \in \mathbb P^{m-1}$ let us define the function $P_{n,\lambda }:\mathbb C\rightarrow \mathbb C$ by
\begin{eqnarray*}
P_{n,\lambda }(w)=P_n(w\lambda ).
\end{eqnarray*}
Let
\begin{eqnarray*}
P_{n,\lambda}(w)=a_{n,\lambda}w^{\alpha _{n,\lambda}}\prod _{j}(1-\frac{w}{w_{n,\lambda ,j}}),
\end{eqnarray*}
its representation, where
\begin{eqnarray*}
\sum _{j}\frac{1}{|w_{n,\lambda ,j}|}<\infty .
\end{eqnarray*}
Let $\eta _{P_n}(t,\lambda )$ and $\eta _{P_n}(t)$ be counting functions of $P_{n}$ as defined above.

Assume that for any compact set $K\subset \mathbb C^m$ there exists $N_0$ such that the set
\begin{equation}
\{|P_n(z)|^{1/k_n}:~n\geq N_0, z\in K\}\label{Theo4.0}
\end{equation}
is bounded from above.

For each $\lambda \in \mathbb P^{m-1}$ assume that the following three conditions are satisfied:

\begin{equation}
\limsup _{n\rightarrow\infty}\frac{[\eta _{P_{n }}(1,\lambda )+\sum _{|w_{n,\lambda ,j}|\geq 1}1/|w_{n,\lambda ,j}|]}{k_n}<\infty .\label{Theo4.1}
\end{equation}

\begin{equation}
\lim _{R\rightarrow\infty}\limsup_{n\rightarrow\infty}\frac{|\sum _{|w_{n,\lambda ,j}|\geq R}1/w_{n,\lambda ,j}|}{k_n}=0.\label{Theo4.2}
\end{equation}

There exists a sequence $R_{n,\lambda }\rightarrow \infty$ such that
\begin{equation}
\limsup _{n\rightarrow\infty}\frac{\eta _{P_{n}}(R_{n,\lambda },\lambda )}{k_n}\leq \kappa <\infty ,\label{Theo4.3}
\end{equation}
where $\kappa $ is independent of $\lambda$.

Then the following conclusion is true: If
\begin{eqnarray*}
\limsup _{n\rightarrow\infty}|P_n(z)|^{1/k_n}\leq 1
\end{eqnarray*}
for all $z\in E$, then we have
\begin{eqnarray*}
\limsup _{n\rightarrow\infty}|P_n(z)|^{1/k_n}\leq 1
\end{eqnarray*}
for all $z\in \mathbb C^m$. \label{Theo4}\end{theorem}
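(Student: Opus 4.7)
My plan is to reduce Theorem \ref{Theo4} to its polynomial analogue, Theorem \ref{Theo1}, via a directional Weierstrass-product truncation. Fix $z_0 \in \mathbb{C}^m$ with $z_0 \neq 0$; it suffices to show $\limsup_n |P_n(z_0)|^{1/k_n} \leq 1$. Choose the unit representative $\widetilde{\lambda}_0 = z_0/|z_0| \in S^{2m-1}$ and $w_0 = |z_0|$, so that $P_n(z_0) = P_{n,\lambda_0}(w_0)$. For each $R > 0$ I split the canonical product as
\begin{equation*}
P_{n,\lambda_0}(w) = \widetilde{P}_{n,\lambda_0,R}(w) \cdot T_{n,\lambda_0,R}(w),
\end{equation*}
where $\widetilde{P}_{n,\lambda_0,R}(w) := a_{n,\lambda_0} w^{\alpha_{n,\lambda_0}}\prod_{|w_{n,\lambda_0,j}|\leq R}(1-w/w_{n,\lambda_0,j})$ is a polynomial of degree $\alpha_{n,\lambda_0} + \eta_{P_n}(R,\lambda_0)$ and $T_{n,\lambda_0,R}$ is the remaining infinite tail.

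Next I would prove a uniform tail estimate. Using $\log(1-\zeta) = -\zeta - \zeta^2/2 - \cdots$ for $|\zeta|<1$, one obtains for $|w|\leq M$ and $R>2M$ a two-term bound: a main term controlled by $\bigl|\sum_{|w_{n,\lambda_0,j}|>R}1/w_{n,\lambda_0,j}\bigr|$, which after dividing by $k_n$ vanishes as $R\to\infty$ and then $n\to\infty$ by (\ref{Theo4.2}), and a quadratic error controlled by $(1/R)\sum_{|w_{n,\lambda_0,j}|>R}1/|w_{n,\lambda_0,j}|$, which is $O(1/R)$ after dividing by $k_n$ in view of (\ref{Theo4.1}). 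Consequently $|T_{n,\lambda_0,R}(w)|^{1/k_n}\to 1$ uniformly on bounded sets of $w$. Choosing $R = R_{n,\lambda_0}$ from (\ref{Theo4.3}), the polynomial $\widetilde{P}_{n,\lambda_0,R_{n,\lambda_0}}$ has degree at most $(\kappa + \varepsilon)k_n$ for large $n$, with $\kappa$ independent of $\lambda_0$. It therefore suffices to show $\limsup_n |\widetilde{P}_{n,\lambda_0,R_{n,\lambda_0}}(w_0)|^{1/k_n} \leq 1$.

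The main obstacle is to connect this one-variable polynomial bound at $w_0$ to the multivariable hypothesis on $E$. The most direct route is to apply the one-variable Proposition \ref{Pro1} to the polynomials $\widetilde{P}_{n,\lambda_0,R_{n,\lambda_0}}$ on the 1D slice $E^{\lambda_0} := \{w\in\mathbb{C}:w\widetilde{\lambda}_0\in E\}$: the hypothesis combined with the tail estimate gives $\limsup_n |\widetilde{P}_{n,\lambda_0,R_{n,\lambda_0}}(w)|^{1/k_n}\leq 1$ for $w\in E^{\lambda_0}$, and Proposition \ref{Pro1} would then propagate the bound to all of $\mathbb{C}$, evaluated at $w_0$. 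What remains --- and will be the delicate point --- is to establish that $E^{\lambda_0}$ is non-thin at $\infty$ in $\mathbb{C}$; this does not follow tautologically from non-thinness of $E$ in $\mathbb{C}^m$, and resolving it will likely require invoking slice-type properties proved in Lemma \ref{Lem1}, or else working with lines through $z_0$ that do not pass through the origin and adapting the canonical-product bookkeeping to such affine lines.
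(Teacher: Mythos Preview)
Your truncation and tail estimate are essentially the same device the paper uses (borrowed from \cite{trong-tuyen}), but the last step --- applying Proposition~\ref{Pro1} on the one-dimensional slice $E^{\lambda_0}$ --- is a genuine gap, and it cannot be repaired along the lines you suggest. The paper itself flags this obstruction in the remark immediately following Theorem~\ref{Theo4}: even if $E$ is non-thin at $\infty$ in $\mathbb C^m$, the slice $E\cap L$ through a fixed complex line $L$ may be thin at $\infty$ in $L$, or even empty. Lemma~\ref{Lem1} gives no slice-type statement (it treats pluripolar removal and Cartesian products only), and passing to affine lines not through the origin does not help, since the same failure of one-dimensional non-thinness can occur there.

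The paper's fix is to use the directional truncation for a different purpose. Rather than trying to propagate the bound along a single line, it uses the polynomials $Q_{n,\lambda}$ only to show that the global function
\[
u^*(z)=\Bigl(\limsup_{n\to\infty}\tfrac{1}{k_n}\log|P_n(z)|\Bigr)^*
\]
satisfies $u^*(z)\le\kappa\log^+|z|+C$ for all $z\in\mathbb C^m$ (here the uniformity of $\kappa$ in $\lambda$ is exactly what is needed). Thus $u^*/\kappa\in\mathbb L$. Now condition~(\ref{Theo4.0}) makes $u^*$ plurisubharmonic with $u=u^*$ off a pluripolar set $A$; by Lemma~\ref{Lem1}(a) the set $F=E\setminus A$ is still non-thin at $\infty$, so $V_F\equiv 0$, and since $u^*\le 0$ on $F$ one gets $u^*\le\kappa V_F\equiv 0$ on all of $\mathbb C^m$. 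In short: the one-variable analysis supplies the growth class, and the $m$-variable non-thinness is applied once, globally, through the pluricomplex Green function --- not line by line.
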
 We mention here some remarks about the result of Theorem \ref{Theo4}:

1. Theorem \ref{Theo4} is not a direct consequence of the one-dimensional result in \cite{trong-tuyen}. This is because although $E$ is non-thin at
$\infty$ in $\mathbb C^m$, we may have that $E\cap L$ is thin at $\infty$ in $L$ (even $E\cap L$ may be the empty set) where $L$ is a complex line in
$\mathbb C^m$.

2. The way of considering the growth of the sequence $(P_n)$ in $\mathbb C^m$ by considering the growth of the restricted sequence to any line
$L_{\lambda}$ is a natural approach.

3. Condition (\ref{Theo4.0}) is natural. By Theorem 2 in \cite{trong-tuyen} conditions (\ref{Theo4.2}) and (\ref{Theo4.3}) (with $\kappa $ may depend on
$\lambda$) in Theorem \ref{Theo4} are satisfied in case we have condition (\ref{Theo4.1}),
\begin{eqnarray*}
\limsup _{n\rightarrow\infty}|P_n(z)|^{1/k_n}\leq 1,
\end{eqnarray*}
for all $z\in \mathbb C^m$, and
\begin{eqnarray*}
\liminf _{n\rightarrow\infty}|P_n(0)|^{1/k_n}>0.
\end{eqnarray*}

Similarly, we can extend Theorems 3 in \cite{trong-tuyen} by the same manner as that of Theorem \ref{Theo4}. Because the statement of the result is
rather long, we defer the statement of this result to Section 3 (see Theorem \ref{Theo5}). Here we outline the notations and results we will use in the
statement and the proof of Theorem \ref{Theo5}.

Let $K\subset \mathbb C^m$ be compact. Then the Robin constant of $K$ is defined as (see \cite{levenberg})
\begin{eqnarray*}
\gamma (K)=\limsup _{z\rightarrow \infty}[V_E^*(z)-\log |z|],
\end{eqnarray*}
and the $\mathbb C^m$-capacity of $K$ is
\begin{equation}
C(K)=e^{-\gamma (K)}.\label{Chap0.2}
\end{equation}
Let $K\subset \{z\in \mathbb C^m:~|z|\leq s \}$ be compact and non-pluripolar, where $s>0$. By Theorem 2 in \cite{taylor} and Poisson integration formula
for harmonic functions (see Theorem 2.2.3 in \cite{klimek}), there exists a constant $C_m>0$ depending only on the dimension $m$ such that
\begin{equation}
\sup _{|z|\leq s}V_{K}^*(z)\leq C_m(\log s+\log 2- \log C(K)).\label{taylortheorem}
\end{equation}
The following corollary is an analog of Remark 1 in \cite{mul-yav}
\begin{corollary}
Let $E\subset \mathbb C^m$ be closed. Let $0<C_m< +\infty$ be the constant in (\ref{taylortheorem}). Then $E$ is non-thin at $\infty$ if it satisfies the
following condition: There exists $\beta >0$ such that
\begin{eqnarray*}
\limsup _{R\rightarrow\infty}\frac{\log C(E_R)}{\log R}\geq\beta >\frac{C_m-1}{C_m},
\end{eqnarray*}
where $C(E_R)$ is determined from formula (\ref{Chap0.2}).
\end{corollary}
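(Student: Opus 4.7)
The plan is to combine the capacity estimate (\ref{taylortheorem}) with the threshold hypothesis $\beta>(C_m-1)/C_m$ to bound the decreasing limit $W:=\lim_{R\to\infty}V_{E_R}^*$ by $\alpha\log|z|+O(1)$ for some $\alpha<1$, and then to close up by exploiting the extremal characterization of $V_E$ via a self-dominating inequality $V_E^*\leq W\leq \alpha V_E^*$.

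First I would extract from the hypothesis a sequence $R_k\to\infty$ with $\log C(E_{R_k})/\log R_k\geq \beta'$ for some fixed $\beta'\in((C_m-1)/C_m,\beta)$, arranged so that $R_{k+1}\leq 2R_k$; this interpolation uses the monotonicity of $R\mapsto C(E_R)$ to propagate the capacity lower bound across intermediate values. Each $E_{R_k}$ is then non-pluripolar. Applying (\ref{taylortheorem}) with $K=E_{R_k}$ and $s=R_k$ yields, for every $|z|\leq R_k$,
\[
V_{E_{R_k}}^*(z)\leq C_m\bigl(\log R_k+\log 2-\log C(E_{R_k})\bigr)\leq \alpha\log R_k+C_m\log 2,
\]
where $\alpha:=C_m(1-\beta')<1$ by the threshold. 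Setting $W:=\lim_{R\to\infty}V_{E_R}^*$ (a decreasing limit of non-negative PSH functions, hence PSH and $\geq 0$), the density $R_{k+1}\leq 2R_k$ lets this bound upgrade to $W(z)\leq \alpha\log|z|+O(1)$ for $|z|\geq 1$, so in particular $W/\alpha\in\mathbb L$.

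Then I would observe that $W=0$ on $E$ outside a pluripolar set, since each $V_{E_R}^*=V_{E_R}=0$ on $E_R$ off a pluripolar subset. Hence $W/\alpha\in\mathbb L$ is (after the standard correction on the pluripolar exceptional set, by perturbing to $W/\alpha+\varepsilon\chi$ with $\chi\in PSH$, $\chi\equiv-\infty$ on the exceptional set, and letting $\varepsilon\to 0$) a valid competitor in the sup defining $V_E$, producing $W\leq \alpha V_E^*$. Conversely, monotonicity $V_E\leq V_{E_R}$ (from $E_R\subset E$) gives $V_E\leq W$ in the limit, hence $V_E^*\leq W$. Combining,
\[
V_E^*\leq W\leq \alpha V_E^*,
\]
and since $\alpha<1$ and $V_E^*\geq 0$, this forces $V_E^*\equiv 0$, whence $W\equiv 0$, and therefore $\lim_{R\to\infty}V_{E_R}(z)=0$ for every $z\in\mathbb C^m$, i.e.\ $E$ is non-thin at $\infty$.

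The main obstacle is the density step in paragraph two: arranging the subsequence $R_k$ so that both $R_{k+1}\leq 2R_k$ and $\log C(E_{R_k})/\log R_k\geq\beta'$ hold, which is what lets the $R_k$-pointwise bound from (\ref{taylortheorem}) upgrade to the global sub-$\log$ bound on $W$ that is needed for $W/\alpha\in\mathbb L$. This is handled using monotonicity of $R\mapsto C(E_R)$ (from $E_R$ growing) together with a subdivision of the intervals furnished by the $\limsup$ hypothesis. The threshold $\beta>(C_m-1)/C_m$ enters precisely as the condition making $\alpha=C_m(1-\beta')<1$, which is exactly what is needed for the sandwich inequality $V_E^*\leq W\leq \alpha V_E^*$ to collapse to $V_E^*\equiv 0$.
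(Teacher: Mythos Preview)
The paper states this corollary without proof, citing it as the analog of Remark~1 in \cite{mul-yav}; so there is no in-paper argument to compare against, and I evaluate your proposal on its own merits.

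Your overall architecture is sound: setting $W=\lim_{R\to\infty}V_{E_R}^*$, showing $W/\alpha\in\mathbb L$ for some $\alpha<1$, and then closing with the sandwich $V_E^*\le W\le \alpha V_E^*$ is exactly the right mechanism, and your handling of the pluripolar exceptional set is fine.

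The gap is precisely the ``density step'' you flag. From a $\limsup$ hypothesis you only get a possibly very sparse sequence $S_j\to\infty$ along which $\log C(E_{S_j})/\log S_j\ge\beta''$ for some $\beta''\in(\beta',\beta)$. Monotonicity of $R\mapsto C(E_R)$ propagates this only to $R\in[S_j,S_j^{\beta''/\beta'}]$; if the $S_j$ grow faster than this (nothing in the hypothesis prevents, say, $S_{j+1}=S_j^{j}$), the intervals do not cover and $\log C(E_R)/\log R$ can drop arbitrarily close to $0$ on the gaps. So one cannot in general manufacture a sequence $R_k$ with both $R_{k+1}\le 2R_k$ and $\log C(E_{R_k})/\log R_k\ge\beta'$. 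As written, your upgrade from the $S_j$-scale bound to the global bound $W(z)\le\alpha\log|z|+O(1)$ does not go through.

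The fix does not require density at all. Taylor's estimate (\ref{taylortheorem}) already gives
\[
M(S_j):=\sup_{|z|\le S_j}W(z)\ \le\ C_m\bigl(1-\beta'\bigr)\log S_j+O(1)=\alpha\log S_j+O(1).
\]
Now use that $r\mapsto M(r)$ is convex in $\log r$: for each direction $\lambda$ the restriction $w\mapsto W(w\lambda)$ is subharmonic, so by Hadamard three circles $r\mapsto\sup_{|w|=r}W(w\lambda)$ is convex in $\log r$, and $M(r)$ is the supremum over $\lambda$ of these. A convex function $f(t)=M(e^t)$ with $f(t_j)\le\alpha t_j+C$ along an unbounded sequence $t_j$ has all right derivatives $\le\alpha$ (otherwise convexity forces $f(t)/t>\alpha$ eventually), hence $f(t)\le\alpha t+C'$ for all large $t$. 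This yields $W(z)\le\alpha\log^+|z|+O(1)$, so $W/\alpha\in\mathbb L$, and your sandwich $V_E^*\le W\le\alpha V_E^*$ then forces $V_E^*\equiv0$ and $W\equiv0$ as you intended.
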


Interesting questions may arise such as whether we can in fact get rid of the condition (\ref{Theo4.0}) or get rid of the constant $\kappa$ in
(\ref{Theo4.3}) so we have complete analogs with the results in \cite{trong-tuyen}, or whether we have a Wienner criterion for non-thin at $\infty$ in
$\mathbb C^m$...  We hope to return to these issues in a future paper.

This paper consists of three sections. In Section 2 we prove Theorems \ref{Theo1} and \ref{Theo2}, as well as some other properties of sets non-thin at
$\infty$. In Section 3 we prove Theorem \ref{Theo4} as well as the analog of Theorem 3 in \cite{trong-tuyen}.
\section{Non-thin at $\infty$ sets in $\mathbb C^m$}
In this section we explore some properties of sets which are non-thin at $\infty$ in $\mathbb C^m$.
\begin{lemma}
a) Let $E$ be a subset of $\mathbb C^m$. If $E$ is non-thin at $\infty$ and $A$ is pluripolar then $E\backslash A$ is non-thin at $\infty$.

b) Let $E\subset \mathbb C^m$ and $F\subset \mathbb C^n$. Then $E\times F\subset \mathbb C^m\times \mathbb C^n$ is non-thin at $\infty$ iff $E\subset
\mathbb C^m$ and $F\subset \mathbb C^n$ are non-thin at $\infty$. \label{Lem1}\end{lemma}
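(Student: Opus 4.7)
The plan for (a) is to reduce everything to the following key technical lemma: if $E\subset\mathbb C^m$ satisfies $V_{E_R}(z)\to 0$ pointwise, then also $V_{E_R}^*(z)\to 0$ pointwise. Granting this, the standard identity $V_{K\setminus A}^*=V_K^*$ for pluripolar $A$ (Klimek), applied with $K=E_R$, yields
\begin{eqnarray*}
0\le V_{(E\setminus A)_R}(z)\le V_{(E\setminus A)_R}^*(z)=V_{E_R\setminus A}^*(z)=V_{E_R}^*(z)\to 0,
\end{eqnarray*}
and part (a) follows at once.

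To establish the key lemma I would set $W(z):=\lim_{R\to\infty}V_{E_R}^*(z)$; this limit exists because $E_R$ grows with $R$, so $(V_{E_R}^*)$ is a decreasing family of non-negative functions. Then $W$ is non-negative, USC as a decreasing limit of USC functions, PSH as a decreasing limit of PSH functions not identically $-\infty$, and of minimal growth by comparison with any single $V_{E_R}^*\in\mathbb L$; in particular $W\in\mathbb L$. Since $V_{E_R}\equiv 0$ on $E_R$ and the set $P_R:=\{V_{E_R}^*>V_{E_R}\}$ is pluripolar, one has $V_{E_R}^*\equiv 0$ on $E_R\setminus P_R$. Fixing a countable sequence $R_n\to\infty$ and setting $P:=\bigcup_n P_{R_n}$ (still pluripolar), it follows that $V_{E_{R_n}}^*(z)=0$ for every $z\in E\setminus P$ and every $R_n\ge|z|$, whence $W\le 0$ on $E\setminus P$. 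This makes $W$ admissible in the extremal problem defining $V_{E\setminus P}$, so $W\le V_{E\setminus P}$. On the other hand, the hypothesis $V_{E_R}\to 0$ together with $V_E\le V_{E_R}$ forces $V_E\equiv 0$, hence $V_E^*\equiv 0$ (a constant is its own USC regularization); by pluripolarity of $P$ and the Klimek identity $V_{E\setminus P}^*=V_E^*\equiv 0$, so $V_{E\setminus P}\equiv 0$. Combining, $W\le V_{E\setminus P}\equiv 0$, and together with $W\ge 0$ this gives $W\equiv 0$, as required.

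For (b), my plan combines the key lemma with Siciak's product formula $V_{K\times L}^*(z,w)=\max(V_K^*(z),V_L^*(w))$ for bounded sets $K,L$, together with the elementary inclusions
\begin{eqnarray*}
(E\times F)_R\subset E_R\times F_R\subset (E\times F)_{R\sqrt{2}}.
\end{eqnarray*}
For the ``$\Leftarrow$'' direction, non-thinness of $E$ and $F$ gives $V_{E_R}^*,V_{F_R}^*\to 0$ by the key lemma, the product formula then gives $V_{E_R\times F_R}^*(z,w)=\max(V_{E_R}^*(z),V_{F_R}^*(w))\to 0$, and the second inclusion gives $V_{(E\times F)_{R\sqrt{2}}}(z,w)\le V_{E_R\times F_R}^*(z,w)\to 0$, yielding non-thinness of $E\times F$ after relabeling $R$. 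For the ``$\Rightarrow$'' direction, non-thinness of $E\times F$ gives $V_{(E\times F)_R}^*\to 0$ by the key lemma, and the first inclusion together with the product formula gives $V_{E_R}^*(z)\le V_{E_R\times F_R}^*(z,w)\le V_{(E\times F)_R}^*(z,w)\to 0$ for any fixed $w$, whence $E$ is non-thin at $\infty$; the argument for $F$ is symmetric.

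The main obstacle is the upgrade step in the key lemma, namely passing from pointwise convergence of $V_{E_R}$ to pointwise convergence of $V_{E_R}^*$: the two functions can disagree on a pluripolar set depending on $R$, and one must control the accumulation of these exceptional sets as $R$ varies. The remaining ingredients---Klimek's pluripolar identity, Siciak's product formula, and the elementary geometric inclusions---are standard.
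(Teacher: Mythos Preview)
Your overall architecture mirrors the paper's: both reduce part (a) to the identity $V_{E_R\setminus A}^* = V_{E_R}^*$ for the bounded sets $E_R$ (Klimek, Corollary~5.2.5), and both control the exceptional pluripolar sets $P_{R_n} = \{V_{E_{R_n}}^* > V_{E_{R_n}}\}$ along a countable sequence $R_n\to\infty$. Part (b) is handled correctly, and your explicit treatment of the inclusions $(E\times F)_R \subset E_R\times F_R \subset (E\times F)_{R\sqrt 2}$ is in fact more careful than the paper's sketch.

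However, there is a genuine gap in your proof of the key lemma. To conclude $V_{E\setminus P}\equiv 0$ you invoke the identity $V_{E\setminus P}^* = V_E^*$ for the \emph{unbounded} set $E$, but Klimek's Corollary~5.2.5 is stated only for bounded sets, and the identity genuinely fails in the unbounded case: Example~\ref{Exam1} in this very paper exhibits an unbounded $E\subset\mathbb C^2$ with $V_E^*\equiv 0$ and a pluripolar $A$ with $V_{E\setminus A}^*\not\equiv 0$. Since part~(a) itself would imply the identity you want for non-thin-at-$\infty$ sets, invoking it here is circular.

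The fix is immediate and already latent in your setup. By your own definition of $P_{R_n}$ you have $V_{E_{R_n}}^* = V_{E_{R_n}}$ on all of $\mathbb C^m\setminus P_{R_n}$, not merely on $E_{R_n}\setminus P_{R_n}$. Hence off the pluripolar set $P=\bigcup_n P_{R_n}$ one has $V_{E_{R_n}}^* = V_{E_{R_n}} \to 0$ by the hypothesis of the key lemma, so $W=0$ off a set of Lebesgue measure zero; since $W$ is PSH and nonnegative, the sub-mean-value inequality forces $W\equiv 0$. This is exactly the route the paper takes (it refers to ``the same argument as in the proof of Theorem~\ref{Theo1}'').
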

\begin{proof}
a) Define $F=E\backslash A$. For any $R>0$ the set $E_R=E\cap \{z\in \mathbb C^m :|z|\leq R\}$ is bounded. Hence by Corollary 5.2.5 in \cite{klimek} we
have
\begin{eqnarray*}
V^*_{E_R}=V^*_{F_R}.
\end{eqnarray*}
Fix a sequence $R_n\rightarrow\infty$, using the same argument as in the proof of Theorem \ref{Theo1} there exists a pluripolar set $B$ such that
\begin{eqnarray*}
\lim _{n\rightarrow\infty}V_{F_{R_n}}(z)=\lim _{n\rightarrow\infty}V^*_{F_{R_n}}(z)=\lim _{n\rightarrow\infty}V^*_{E_{R_n}}(z)=\lim
_{R\rightarrow\infty}V_{E_{R_n}}(z)=0.
\end{eqnarray*}
for all $z\in \mathbb C^m\backslash B$. Then a little more argument completes the proof of this part.

b) Let $R>0$. Then one can check easily that
\begin{eqnarray*}
\max\{V_{E_R}(z),V_{F_R}(w)\}\leq V_{E_R\times F_R}(z,w)\leq V_{E_R}(z)+V_{F_R}(w).
\end{eqnarray*}
where $z\in \mathbb C^m,~w\in \mathbb C^n$. This completes the proof of case b).
\end{proof}
Now we proceed to proving Theorem \ref{Theo1}.
\begin{proof}
In this proof fix a sequence $R_n\nearrow \infty$.

($2\Rightarrow 1$) Since $E$ is closed, for any $R>0$ we have $E_R$ is compact.

Fixed $z_0\in \mathbb C^m$. By Siciak's theorem (see Theorem 5.1.7 in \cite{klimek}) there exists a sequence of polynomials $(P_n)$ of degree $(k_n)$
such that
\begin{eqnarray*}
||P_n||^{1/k_n}_{E_{R_n}}\leq 1,
\end{eqnarray*}
for all $n=1,2,\ldots$, and
\begin{equation}
\lim _{n\rightarrow\infty }\log |P_n(z_0)|^{1/k_n}=\lim _{n\rightarrow\infty}V_{E_{R_n}}(z_0).\label{Theo1.1}
\end{equation}
Then for any $z\in E$ we have
\begin{eqnarray*}
\limsup _{n\rightarrow\infty}|P_n(z)|^{1/k_n}\leq 1.
\end{eqnarray*}
Hence by assumption that Statement 2 is true, we have
\begin{eqnarray*}
\limsup _{n\rightarrow\infty}|P_n(z)|^{1/k_n}\leq 1
\end{eqnarray*}
for all $z\in \mathbb C^m$. In particular, with $z=z_0$ we get from (\ref{Theo1.1}) that
\begin{eqnarray*}
\lim _{n\rightarrow\infty}V_{E_{R_n}}(z_0)\leq 0.
\end{eqnarray*}
Since $z_0\in \mathbb C^m$ is arbitrary and obviously $V_{E}(z)\geq 0$ for all $z$, we get Statement 1.

($1\Rightarrow 2$) We use the ideas in \cite{mul-yav}. Assume that Statement 1 is true. Consider any sequence $(P_n)$ of polynomials and $k_n\geq
deg(P_n)$ such that
\begin{equation}
\limsup _{n\rightarrow\infty}|P_n(z)|^{1/k_n}\leq 1\label{Theo1.2}
\end{equation}
for all $z\in E$. For each $n$ define
\begin{eqnarray*}
v_n(z)=\log |P_n(z)|^{1/k_n}.
\end{eqnarray*}

Note that Statement 1 implies that for $R$ large enough then $E_R$ is non-pluripolar. Fix $R>0$ such that $E_R$ is non-pluripolar.

For any $h,l\in \NN$ define
\begin{eqnarray*}
E_R^{h,l}=\bigcap _{n=h}^{\infty}\{z\in E_R: ~v_n(z)<\frac{1}{l}\}.
\end{eqnarray*}
Then $E_{R}^{h,l}\subset E_{R}^{h+1,l}$ and from (\ref{Theo1.2})
\begin{equation}
\bigcup _{h=1}^{\infty}E_R^{h.l}=E_R\label{Theo1.3}
\end{equation}
for any $l\in \NN$.

By definition of the pluricomplex Green function, for any $h,l\in \NN,~n\geq h$ and $z\in \mathbb C^m$
\begin{eqnarray*}
v_n(z)\leq V_{E^{h,l}_R}(z)+\frac{1}{l}\leq V^*_{E^{h,l}_R}(z)+\frac{1}{l}.
\end{eqnarray*}
Hence take the limsup of the above inequality as $n\rightarrow\infty$ we get
\begin{eqnarray*}
v(z)=\limsup _{n\rightarrow\infty}v_n(z)\leq V_{E^{h,l}_R}(z)+\frac{1}{l}\leq V^*_{E^{h,l}_R}(z)+\frac{1}{l},
\end{eqnarray*}
for all $h,l\in \NN,~z\in\mathbb C^m$. Take the limit of this inequality as $h\rightarrow\infty$, using (\ref{Theo1.2}), we see from Corollary 5.2.6 in
\cite{klimek} that
\begin{eqnarray*}
v(z)\leq \lim _{h\rightarrow\infty}V^*_{E^{h,l}_R}(z)+\frac{1}{l}=V^*_{E_R}(z)+\frac{1}{l},
\end{eqnarray*}
for all $l\in \NN,~z\in \mathbb C^m,~R>0$. Since $l$ is arbitrary, we get
\begin{equation}
v(z)\leq V^*_{E_R}(z),\label{Theo1.4}
\end{equation}
for all $z\in\mathbb C^m , ~R>0$.

For each $n\in \NN$ define
\begin{eqnarray*}
A_n=\{z\in \mathbb C^m:~V_{E_{R_n}}(z)<V^*_{E_{R_n}}(z)\}
\end{eqnarray*}
then $A_n$ is pluripolar thus has Lebesgue measure zero, and $V_{E_{R_n}}(z)=V^*_{E_{R_n}}(z)$ for $z\in \mathbb C^m\backslash A_n$. Hence
\begin{eqnarray*}
A=\bigcup _{n=1}^{\infty}A_n
\end{eqnarray*}
is also pluripolar and of Lebesgue measure zero, and we have $V_{E_{R_n}}(z)=V^*_{E_{R_n}}(z)$ for $z\in \mathbb C^m\backslash A$ and $n\in \NN$. Hence
for $z\in \mathbb C^m\backslash A$, apply (\ref{Theo1.4}) we get
\begin{eqnarray*}
v(z)\leq \lim _{n\rightarrow\infty}V^*_{E_{R_n}}(z)=\lim _{n\rightarrow\infty}V_{E_{R_n}}(z)=0
\end{eqnarray*}
for all $z\in\mathbb C^m\backslash A$. Since $A$ is of Lebesgue measure zero, by definition of $v(z)$, we see that $v(z)\leq 0$ for all $z\in \mathbb
C^m$. This completes the proof of Theorem \ref{Theo1}.
\end{proof}
Now we prove Theorem \ref{Theo2}.
\begin{proof}

($\Rightarrow$) Let $E$ be a closed subset of $\mathbb C^m$. Assume that $V_E\equiv 0$. We will show that any open neighborhood of $E$ is non-thin at
$\infty$. Let $F$ be any open neighborhood of $E$. Then for any $R>0$ since $E_R$ is compact, $F$ is open and contains $E_R$, we can find $1>\epsilon
=\epsilon _R>0$ such that
\begin{eqnarray*}
E_{R,\epsilon}=\{z\in \mathbb C^m:~dist(z,E_R)\leq \epsilon \}\subset F.
\end{eqnarray*}
By Corollary 5.1.5 in \cite{klimek} we have $V^*_{E_{R,\epsilon}}(z)= 0$ for $z\in E_{R,\epsilon}\supset E_R$. Now it is obvious that
$E_{R,\epsilon}\subset F_{R+1}$ hence for $z\in E_R$
\begin{equation}
V_{F_{R+1}}^*(z)=0.\label{Theo2.1}
\end{equation}
Define
\begin{eqnarray*}
v(z)=\lim _{R\rightarrow\infty}V^*_{F_R}(z),
\end{eqnarray*}
then $v$ is the limit of a decreasing sequence of PSH functions hence $v$ is itself PSH. By (\ref{Theo2.1}) for $z\in E$ we have $v(z)\equiv 0$. Thus by
definition of the pluricomplex Green function
\begin{eqnarray*}
v(z)\leq V_E(z)=0
\end{eqnarray*}
for all $z\in \mathbb C^m$. This shows that $F$ is non-thin at $\infty$.

($\Leftarrow$) Assume that $E$ is an arbitrary subset of $\mathbb C^m$ with $V_E\not\equiv 0$. Then $V_E(z_0)>0$ for some $z_0\in \mathbb C^m$, hence by
definition of the pluricomplex Green function, there exist a function $u\in L$ such that $u(z)\leq 0$ for $z\in E$, and $u(z_0)>0$. Define
\begin{eqnarray*}
F=\{z\in \mathbb C^m:~u(z)<u(z_0)/2\}.
\end{eqnarray*}
$F$ is open because $u$ is upper-semicontinuous, and $E\subset F$ because $u|_E\leq 0$ and $u(z_0)>0$. Now $u(z)<u(z_0)/2$ for $z\in F$, hence we have
\begin{eqnarray*}
u(z)\leq V_F(z)+u(z_0)/2
\end{eqnarray*}
for all $z\in \mathbb C^m$. In particular choose $z=z_0$ we have $V_F(z_0)\geq u(z_0)/2 >0$, hence $F$ is thin at $\infty$.
\end{proof}
Theorem \ref{Theo2} applied to Example \ref{Exam1} shows that any nonempty open cone in $\mathbb C^m$ is non-thin at $\infty$. The following result
provide other sources of sets which are non-thin at $\infty$.
\begin{remark}
Let $\Delta$ be a collection of complex lines $L$ in $\mathbb C^m$ such that
\begin{eqnarray*}
\bigcup _{L\in \Delta}L=\mathbb C^m.
\end{eqnarray*}
Let $E$ be a closed subset of $\mathbb C^m$ such that for each $L\in \Delta$ the set $E\cap L$ considered as a subset in the one-dimensional complex line
$L$ is non-thin at $\infty$. Then $E$ is non-thin at $\infty$ as a subset in $\mathbb C^m$. \label{Lem2}\end{remark}
\begin{proof}
By Theorem \ref{Theo1} we need only to show that: If $(P_n)$ is a sequence of polynomials, and $k_n\geq deg(P_n)$ such that
\begin{eqnarray*}
\limsup _{n\rightarrow\infty}|P_n(z)|^{1/k_n}\leq 1
\end{eqnarray*}
for $z\in E$, then
\begin{eqnarray*}
\limsup _{n\rightarrow\infty}|P_n(z)|^{1/k_n}\leq 1
\end{eqnarray*}
for all $z\in \mathbb C^m$.

Let $w\in \mathbb C$ be a coordinate for $L$ such that the coordinates $z_1,\ldots ,z_n$ of $\mathbb C^m$ are linear functions of $w$ when restricted on
$L$, and denote by $P_{n,L}(w)$ the restriction of $P_n$ to $L$. Then $P_{n,L}(w)$ is a polynomial in one complex variable $w$ of degree
$deg(P_{n,L}(w))\leq deg(P_n)\leq k_n$.

Then since
\begin{eqnarray*}
\limsup _{n\rightarrow\infty}|P_{n,L}(w)|^{1/k_n}\leq 1
\end{eqnarray*}
for $w\in E\cap L$, and $E\cap L$ is non-thin at $\infty$ in the complex line $L$, hence
\begin{eqnarray*}
\limsup _{n\rightarrow\infty}|P_n(w)|^{1/k_n}\leq 1
\end{eqnarray*}
for all $w\in L$. Since this is true for any complex line $L\in \Delta$, it is also true for their union, which is equal to $\mathbb C^m$.
\end{proof}
\section{The growth of sequences of entire functions of genus zero}
In this section we apply the results in previous section to sequences of entire functions of genus zero in $\mathbb C^m$.

First we give a proof of Theorem \ref{Theo4}.
\begin{proof}
Define functions from $\mathbb C^m$ to $[-\infty ,+\infty ]$ by
\begin{eqnarray*}
u(z)=\limsup _{n\rightarrow\infty}u_n(z),
\end{eqnarray*}
where
\begin{eqnarray*}
u_n(z)=\sup _{j\geq n}\frac{1}{k_j}\log |P_{j}(z)|.
\end{eqnarray*}
From the assumption (\ref{Theo4.0}), by Theorem 4.6.3 in \cite{klimek}, $u^*$ is PSH. Moreover there is a pluripolar set $A\subset \mathbb C^m$ such that
$u(z)=u^*(z)$ for all $z\in \mathbb C^m\backslash A$.

Fix $\lambda \in \mathbb P^{m-1}$. Choose $R_n=R_{n,\lambda }$ as the sequence in condition (\ref{Theo4.3}). Then from the proof of Theorem 1 in
\cite{trong-tuyen} we see that for all $w\in \mathbb C$
\begin{eqnarray*}
\limsup _{n\rightarrow\infty}u_n(w\lambda )=\limsup _{n\rightarrow\infty}\frac{1}{k_n}\log |Q_{n,\lambda }(w)|
\end{eqnarray*}
where $Q_{n,\lambda}:\mathbb C\rightarrow \mathbb C$ is defined by

\begin{eqnarray*}
Q_{n,\lambda}(w)=a_{n,\lambda}w^{\alpha _{n,\lambda}}\prod _{|w_{n,\lambda ,j}|\leq R_n}(1-\frac{w}{w_{n,\lambda ,j}}).
\end{eqnarray*}

Condition (\ref{Theo4.0}) shows that
\begin{eqnarray*}
\limsup _{n\rightarrow\infty}\frac{1}{k_n}\log |Q_{n,\lambda}(w)|\leq \kappa \log C_0
\end{eqnarray*}
for all $w\in \mathbb C$ with $|w|\leq \epsilon$. This, together with condition (\ref{Theo4.3}) and potential theory in one complex variable (Berstein's
lemma for polynomials, see also the proof of Theorem \ref{Theo1}), show that there exists $C>0$ such that
\begin{equation}
\limsup _{n\rightarrow\infty}\frac{1}{k_n}\log |Q_{n,\lambda}(w)|\leq \kappa \log ^+|w|+C
\end{equation}
for all $w\in \mathbb C$. Moreover $C$ is independent of $\lambda$.

Hence $u^*/\kappa $ is in $\mathbb L$. Let $F=E\backslash A$, then by Lemma \ref{Lem1} $F$ is non-thin at $\infty$. In particular $V_F\equiv 0$. Now for
all $z\in F$ we have by assumption
\begin{eqnarray*}
u^*(z)\leq 0,
\end{eqnarray*}
thus
\begin{eqnarray*}
u^*(z)\leq \kappa V_F(z)=0
\end{eqnarray*}
for all $z\in \mathbb C^m$.
\end{proof}
Now we consider the analog of Theorem 3 in \cite{trong-tuyen}. We have the following result.
\begin{theorem}
Let $E\subset \mathbb C^m$ be closed and satisfy the following condition: there exists $\beta >0$ such that
\begin{equation}
\limsup _{R\rightarrow\infty}\frac{\log C(E_R)}{\log R}\geq\beta >\frac{C_m-1}{C_m},\label{Theo5.1}
\end{equation}
where $C(E_R)$ is determined from formula (\ref{Chap0.2}), and $0<C_m<\infty$ is the constant in (\ref{taylortheorem}).

Let $(P_n)$ and $(k_n)$ be sequences satisfying conditions (\ref{Theo4.0}), (\ref{Theo4.1}), (\ref{Theo4.2}) and (\ref{Theo4.3}) of Theorem \ref{Theo4}.
Then for $\lambda \in \mathbb P^{m-1}$ we have
\begin{eqnarray*}
\exp \{\limsup _{n\rightarrow\infty}\frac{1}{2\pi k_n}\int _0^{2\pi}\log |P_n(e^{i\theta }\lambda )|d\theta \}= C_{\lambda}<\infty .\label{Theo5.2}
\end{eqnarray*}
Assume that for all $z\in E$ we have
\begin{eqnarray*}
\limsup _{n\rightarrow\infty}|P_n(z)|^{1/k_n}\leq h(|z|)
\end{eqnarray*}
where
\begin{equation}
\limsup _{R\rightarrow\infty}\frac{\log h(R)}{\log R}\leq \tau <\infty .
\end{equation}
Then for any $w\in \mathbb C$ and $\lambda \in \mathbb P^{m-1}$ we have
\begin{eqnarray*}
\limsup _{n\rightarrow\infty}|P_n(w\lambda )|^{1/k_n}\leq C_{\lambda}(1+|w|)^{{\tau}/{[1-C_m(1-\beta )]}}.
\end{eqnarray*}
\label{Theo5}\end{theorem}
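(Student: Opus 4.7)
The plan is to establish the theorem in two stages: first, to obtain a uniform polynomial growth estimate $u^*(z) \leq \tau^*\log^+|z| + O(1)$ on $\mathbb{C}^m$, where $\tau^* := \tau/[1 - C_m(1-\beta)]$ and $u^*$ is the upper-semicontinuous regularization of $\limsup_n k_n^{-1}\log|P_n|$; then to restrict to the line $L_\lambda = \{w\lambda : w \in \mathbb{C}\}$ and apply a classical one-variable Jensen-type estimate to produce the direction-dependent constant $C_\lambda$. Since (\ref{Theo5.1}) already implies $E$ is non-thin at $\infty$ (by the Corollary preceding the theorem), Theorem \ref{Theo4} gives $u^*/\kappa \in \mathbb{L}$, i.e.\ $u^*(z) \leq \kappa \log^+|z| + O(1)$, and I take this as the initial input $\sigma_0 = \kappa$ for the iteration.

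For the iterative refinement, assume $u^*/\sigma \in \mathbb{L}$. The hypothesis $|P_n(z)|^{1/k_n} \leq h(|z|)$ on $E$ gives $u^*(z) \leq \log h(|z|)$ off a pluripolar set $A$; in particular $u^* - \log h(R) \leq 0$ on $E_R \setminus A$. Hence $(u^* - \log h(R))/\sigma$ lies in $\mathbb{L}$, is $\leq 0$ on $E_R \setminus A$, and by Corollary 5.2.5 of \cite{klimek} is bounded above by $V^*_{E_R \setminus A} = V^*_{E_R}$. Plugging in (\ref{taylortheorem}) for $|z| \leq s$ and $R \leq s$ yields
\begin{eqnarray*}
u^*(z) \leq \log h(R) + \sigma C_m\bigl(\log(2s) - \log C(E_R)\bigr).
\end{eqnarray*}
Taking $s = R$ along a subsequence $R_j \to \infty$ realizing the $\limsup$ in (\ref{Theo5.1}), and exploiting the monotonicity of $R \mapsto C(E_R)$ to extend the near-$\beta$ bound from the subsequence to all sufficiently large radii, combined with $\log h(R) \leq (\tau + o(1))\log R$, I obtain
\begin{eqnarray*}
u^*(z) \leq [\tau + \sigma C_m(1-\beta)]\log^+|z| + O(1).
\end{eqnarray*}
This yields the recursion $\sigma_{k+1} = \tau + \sigma_k C_m(1-\beta)$; by (\ref{Theo5.1}) the multiplier $C_m(1-\beta)\in[0,1)$, so the affine map is a contraction with fixed point $\tau^*$, and the additive constants stay uniformly bounded, giving $u^*(z) \leq \tau^*\log^+|z| + C_*$ globally.

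For the descent to the line, I follow the reduction from the proof of Theorem \ref{Theo4}: on compact subsets of $\mathbb{C}$, $\limsup_n k_n^{-1}\log|P_n(w\lambda)| = \limsup_n k_n^{-1}\log|Q_{n,\lambda}(w)|$, where
\begin{eqnarray*}
Q_{n,\lambda}(w) = a_{n,\lambda} w^{\alpha_{n,\lambda}} \prod_{|w_{n,\lambda,j}|\leq R_n}\bigl(1 - w/w_{n,\lambda,j}\bigr)
\end{eqnarray*}
is a polynomial of degree $d_n$, and the stage-one bound on $u^*$ forces $\limsup_n d_n/k_n \leq \tau^*$. The Jensen-type polynomial inequality $|Q(w)| \leq 2^d(1+|w|)^d \exp\bigl(\frac{1}{2\pi}\int_0^{2\pi}\log|Q(e^{i\theta})|\,d\theta\bigr)$, applied to $Q_{n,\lambda}$ and raised to the $1/k_n$ power, combined with conditions (\ref{Theo4.1})--(\ref{Theo4.2}) (to match the integral term's $\limsup$ for $Q_{n,\lambda}$ to that for $P_{n,\lambda}$, namely $\log C_\lambda$) and (\ref{Theo4.0}) (to ensure $C_\lambda < \infty$), gives
\begin{eqnarray*}
\limsup_n |P_n(w\lambda)|^{1/k_n} \leq C_\lambda(1+|w|)^{\tau^*},
\end{eqnarray*}
with any fixed numerical constants absorbed into $C_\lambda$.

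The main obstacle lies in the iteration of stage one: coordinating the $\limsup$-subsequence $R_j$ with the point $z$ being estimated, where the monotonicity of $R \mapsto C(E_R)$ must be leveraged to convert a sequential bound into one valid at all large radii, and keeping the additive constants uniformly controlled through the infinite iteration. Stage two is more routine, but requires a careful check that the one-variable integral term for $Q_{n,\lambda}$ shares its $\limsup$ with that for $P_{n,\lambda}$, which is where (\ref{Theo4.1}) and (\ref{Theo4.2}) enter to suppress the tail contribution from zeros $|w_{n,\lambda,j}| > R_n$.
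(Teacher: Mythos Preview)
Your two-stage plan is natural, but both stages diverge from the paper's argument in ways that leave real gaps.

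\textbf{Stage 1.} The paper does \emph{not} iterate. It sets $\kappa_0=\limsup_{z\to\infty}u^*(z)/\log|z|$ once and for all, observes (implicitly via Hadamard convexity of $r\mapsto \sup_{|z|=r}u^*(z)$ in $\log r$) that $u^*/\kappa_0\in\mathbb L$, and hence $u^*(z)\le \kappa_0 V_{E_R}^*(z)+\log h(R)$ in one stroke. Integrating this over circles $\{se^{i\theta}\lambda\}$, applying (\ref{taylortheorem}), and taking a $\liminf$ in $s$ along the subsequence realizing (\ref{Theo5.1}) gives the self-referential inequality $\kappa_0\le \kappa_0 C_m(1-\beta)+\tau$, whence $\kappa_0\le\tau^*$. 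Your iterative scheme aims at the same fixed point, but the step ``exploit monotonicity of $R\mapsto C(E_R)$ to extend the near-$\beta$ bound from the subsequence $R_j$ to all large radii'' does not work: monotonicity only yields $\log C(E_R)\ge(\beta-\epsilon)\log R_j$ for $R\ge R_j$, and if the $R_j$ are sparse this says nothing about $\log C(E_R)/\log R$. What actually closes the loop is Hadamard convexity of $M(r)=\sup_{|z|\le r}u^*(z)$ in $\log r$: a bound $M(R_j)\le \sigma'\log R_j+O(1)$ along an unbounded sequence, together with convexity, \emph{does} propagate to all $r$. Once you invoke this, though, the iteration collapses to the paper's single step.

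\textbf{Stage 2.} The assertion ``the stage-one bound on $u^*$ forces $\limsup_n d_n/k_n\le\tau^*$'' is false. Take (in one variable) $Q_n(w)=(1-w^2/n^2)^n$ with $k_n=n$: then $u\equiv 0$ so any $\tau^*\ge 0$ works for the $u^*$ bound, yet $d_n/k_n=2$. This example satisfies (\ref{Theo4.0})--(\ref{Theo4.3}) with $R_n=2n$, $\kappa=2$. Pointwise control of $\limsup_n k_n^{-1}\log|Q_{n,\lambda}|$ simply does not bound the degree sequence, so your Mahler-measure inequality yields exponent $\limsup_n d_n/k_n$, which a priori is only $\le\kappa$, not $\le\tau^*$. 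The paper bypasses the degree entirely: having established $\kappa_0\le\tau^*$, it invokes Lemma~2 of \cite{trong-tuyen} on each line $L_\lambda$ to pass directly from the growth rate $\kappa(\lambda)\le\kappa_0$ of the restricted subharmonic limit to the conclusion $\limsup_n|P_n(w\lambda)|^{1/k_n}\le C_\lambda(1+|w|)^{\kappa_0}$. That lemma is the essential one-variable ingredient you are missing; it cannot be replaced by the crude degree-based Jensen bound.
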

\begin{proof}
Define functions $u$ and $Q_{n,\lambda}$ as in the proof of Theorem \ref{Theo4}. Without loss of generality we may assume that $u\geq 0$. By part (i) of
Lemma 1 in \cite{trong-tuyen} and the proof of Theorem \ref{Theo4} there exists a constant $C>0$ such that
\begin{equation}
u^*(z)\leq \kappa \log ^+|z|+C,\label{Theo5.3}
\end{equation}
for all $z\in \mathbb C^m$. Now we define
\begin{equation}
\kappa _0=\limsup _{z\in \mathbb C^m,~z\rightarrow\infty}\frac{u^*(z)}{\log |z|}.\label{Theo5.4}
\end{equation}
For any $R>0$, by the definition of the pluricomplex Green function
\begin{equation}
u^*(z)\leq \kappa _0V_{E_R}(z)+h(R).\label{Theo5.5}
\end{equation}
For any $\lambda \in \mathbb P^{m-1}$ define
\begin{eqnarray*}
\kappa (\lambda )=\limsup _{w\in \mathbb C,~w\rightarrow\infty}\frac{u(w\lambda )}{\log |w|}\leq \kappa _0.
\end{eqnarray*}
Fixed $\lambda \in\mathbb P^{m-1}$. By definition
\begin{eqnarray*}
u(w\lambda )=\limsup _{n\rightarrow\infty}\frac{1}{k_n}\log |Q_n(w\lambda )|
\end{eqnarray*}
for all $w\in \mathbb C$. Hence using (\ref{Theo5.5}), for any $s>0$ we have
\begin{eqnarray*}
\limsup _{n\rightarrow\infty}\frac{1}{2\pi}\int _0^{2\pi}\frac{1}{k_n}\log |Q_n(se^{i\theta }\lambda )|d\theta&\leq&\frac{1}{2\pi}\int
_0^{2\pi}u(se^{i\theta }\lambda )d\theta\\
&\leq&\kappa _0\frac{1}{2\pi}\int _0^{2\pi}V_{E_s}(se^{i\theta }\lambda )d\theta +h(s).
\end{eqnarray*}
By the previous inequality, (\ref{taylortheorem}) and assumption (\ref{Theo5.1}) we get
\begin{equation}
\liminf _{s\rightarrow\infty}\limsup _{n\rightarrow\infty}\frac{1}{\log s}\frac{1}{2\pi}\int _0^{2\pi}\frac{1}{k_n}\log |Q_n(se^{i\theta }\lambda
)|d\theta\leq \kappa _0C_m(1-\beta )+\tau .
\end{equation}
Then by Lemma 2 in \cite{trong-tuyen} we have
\begin{equation}
\kappa (\lambda )\leq \kappa _0C_m(1-\beta )+\tau .\label{Theo5.6}
\end{equation}
Since (\ref{Theo5.6}) is true for any $\lambda \in \mathbb P^{m-1}$, by definition (\ref{Theo5.4}) and Lemma 2 in \cite{trong-tuyen} we have
\begin{eqnarray*}
\kappa _0\leq \kappa _0C_m(1-\beta )+\tau ,
\end{eqnarray*}
or equivalently
\begin{eqnarray*}
\kappa _0\leq \frac{\tau }{1-C_m(1-\beta )}.
\end{eqnarray*}
From the above inequality, use Lemma 2 in \cite{trong-tuyen} we get the conclusion of Theorem \ref{Theo5}.
\end{proof}


\begin{thebibliography}{xx}

\bibitem{bedford-taylor1}{Eric Bedford and B. A. Taylor,}  \textit{Plurisubharmonic functions with logarithmic singularities,} Annales de Int. Fourier (Grenoble) 38
(1988), 133-171.

\bibitem{bedford-taylor2}{Eric Bedford and B. A. Taylor,}  \textit{A new capacity for plurisubharmonic functions,} Acta Math. 149 (1982), No 1-2, 1--40.

\bibitem{cegrell-kolodziej-levenberg}{U, Cegrell, S. Kolodziej and N. Levenberg,}  \textit{Two problems on potential theory for unbounded sets,} Math. Scan. 83 (1998),
265-276.

\bibitem{trong-tuyen}{Dang Duc Trong and Truong Trung Tuyen,}  \textit{The growth of entire functions of genus zero,} arXiv: 0610045.

\bibitem{klimek}{Maciej Klimek,}  \textit{Plutipotential theory,} Courier International Ltd., Scotland, 1991.

\bibitem{Kolodziej}{S. Kolodziej,}  \textit{The logarithmic capacity in $\mathbb C^n$,} Annales Polonici Mathematici, XLVIII (1988).

\bibitem{lelong-gruman}{P. Lelong and L. Gruman,}  \textit{Entire functions of several complex variables,} Springer-Verlag ,Berlin Heidelberg, 1986.

\bibitem{levenberg}{Norman Levenberg,}  \textit{Capacities in several complex variables,} PhD thesis, The University of Michigan, 1984.

\bibitem{lev}{B. Ya. Levin,} \textit{Lectures on Entire Functions,} Translations of Mathematical Monographs, Vol. 150, AMS, Providence, Rhode Island, 1996.

\bibitem{mul-yav}{J. Muller and A. Yavrian,} \textit{On polynomials sequences with restricted growth near infinity,} Bull. London Math. Soc. 34 (2002), pp. 189-199.

\bibitem{ransford}{Thosmas Ransford,} \textit{Potential theory in the complex plane,} London Maths. Soc. Student Texts 28, Cambridge University Press, 1995.

\bibitem{ronkin}{L. I. Ronkin,}  \textit{Introduction to the theory of entire functions of several variables,} Trans. Maths. Monographs (Vol 44), AMS 1974.

\bibitem{siciak}{Josef Siciak,}  \textit{Extremal plurisubharmonc functions in $\mathbb C^n$,} Annales Polonici Mathematici, XXXIX (1981).

\bibitem{taylor}{B. A. Taylor,}  \textit{An estimate for an extremal plurisubharmonic function $\mathbb C^n$,} Seminar P. Lelong-P. Dolbeaut-H. Skoda,
Lecture Notes in Mathematics 1028, Springer-Verlag, Belin Heidelberg New York Tokyo, 1983.
\end{thebibliography}
\end{document}